\newtheorem{proposition}{Proposition}
\newtheorem{theorem}{Theorem}
\newtheorem{definition}{Definition}
\newtheorem{corollary}{Corollary}
\theoremstyle{definition} 
\newtheorem{remark}{Remark}
\newcommand{\mbf}[1]{\mathbf{#1}}
\definecolor{dgreen}{rgb}{0,0.20,0}
\begin{document}

\title{On the two-step estimation of the cross--power spectrum for dynamical inverse problems}
\author{Elisabetta Vallarino$^1$, Sara Sommariva$^1$, Michele Piana$^{1,2}$ and Alberto Sorrentino$^{1,2}$}
\address{$^1$ Dipartimento di Matematica, Universit\`a di Genova, Italy}
\address{$^2$ CNR--SPIN, Genova, Italy}
\date{November 2018}

\begin{abstract}
We consider the problem of reconstructing the cross--power spectrum of an unobservable multivariate stochatic process from indirect measurements of a second multivariate stochastic process, related to the first one through a linear operator. In the two--step approach, one would first compute a regularized reconstruction of the unobservable signal, and then compute an estimate of its cross--power spectrum from the regularized solution. We investigate whether the optimal regularization parameter for reconstruction of the signal also gives the best estimate of the cross--power spectrum. 
We show that the answer depends on the regularization method, and specifically we prove that, under a white Gaussian assumption: (i) when regularizing with truncated SVD the optimal parameter is the same; (ii) when regularizing with the Tikhonov method, the optimal parameter for the cross--power spectrum is lower than half the optimal parameter for the signal. We also provide evidence that a one--step approach would likely have better mathematical properties of the two--step approach. Our results apply particularly to the brain connectivity estimation from magneto/electro-encephalographic recordings and provide a formal interpretation of recent empirical results.
\end{abstract}
{\it Keywords\/}: regularization theory, multivariate stochastic processes, cross--power spectrum, magneto--/electro--encephalography (M/EEG), functional connectivity.

\section{Introduction}

Dynamical inverse problems are typically concerned with two interplaying and in some sense still open issues. The first one is related to the reconstruction of the unobserved, multivariate stochastic process from the measured time series; the second one is the estimate of the statistical interdependence of the individual components of the  multivariate stochastic process. A paradigmatic example of these issues is the estimate of brain functional connectivity from recordings of magneto/electro-encephalographic (M/EEG) data, currently a hot topic in neuroscience.  Functional connectivity is systematically used to study both the healthy \cite{de2010temporal} and the pathological \cite{st_10, vanMi_etal19} brain, either at rest \cite{br_11} or during the execution of specific tasks \cite{li_etal15, wahe_15}. 

While there is no unique formal definition of functional connectivity, the term is generally used to identify various forms of statistical interdependence between the temporal waveforms of spatially distinct brain areas \cite{sakkalis_11}.
In the last couple of decades, such interdependence is increasingly studied in the frequency domain; this makes sense in light of the increasingly accepted model that neural interactions between brain regions are mediated by synchronization of their rhythmic activity in specific frequency bands \cite{fr05}.
However, M/EEG only record the magnetic field/electric potential at the scalp; therefore functional connectivity between brain regions has to be estimated indirectly, using the scalp data and the physical model that relates neural currents to the recordings.

In this framework, the majority of connectivity studies employs a two--step approach \cite{sc_gr19}: first, an estimate of the source time courses is obtained using an inverse method; then, frequency--domain connectivity metrics are computed from the cross--spectrum of the reconstructed source time courses. Due to the multitude of available inverse methods  \cite{hail94, vvetal97, calvetti2015hierarchical, costa2017bayesian, bekhti2018hierarchical, luetal19, sorrentino2017inverse, hapu_17,  ilsa19} and connectivity metrics \cite{baccala2001partial, nolte04, chella2014third, geweke1982measurement, pereda_05, sakkalis_11}, in the last decade there has been growing interest in validating and comparing different combinations of methods \cite{fraschini2016effect, hietal17,ch_19, sommariva2019comparative, nuetal19}.

Recent empirical evidence suggests that the two--step approach might feature an unexpected parameter tuning issue. Indeed, the cure of ill--posedness requires a tradeoff between solution complexity and data fitting, and this tradeoff is realized by means of the selection of the optimal regularization parameter. 
It would seem natural that the optimal estimate of the cross--spectrum can only be attained with the optimal reconstruction of the signal. Yet, in a recent study \cite{hietal16} have shown that 
the value of the regularization parameter that provides the best reconstruction of the source spectral power does not coincide with the value that provides the best reconstruction of the source-level functional connectivity quantified through coherence.

Motivated by this empirical result, in this work we investigate the following problem: let $\mbf{Y}(t)$ be noisy and indirect measurements of a multivariate stochastic process $\mbf{X}(t)$; let $\mbf{x}_{\lambda}(t)$ be the reconstruction of the hidden signal, obtained by means of a regularization algorithm; finally, assume that the cross--spectrum of $\mbf{X}(t)$, denoted as $\mbf{S}^{\mbf{X}}(f)$, is estimated from the reconstructed signal $\mbf{x}_{\lambda}(t)$; under these conditions, does the optimal regularization parameter for reconstruction of the hidden signal coincide with the optimal regularization parameter for reconstruction of its cross--spectrum?

In particular, we will prove that the answer is ``no'' when the regularized solution is computed via Tikhonov regularization, thus confirming the empirical results of \cite{hietal16}. We will also prove that the answer is ``yes'' when the regularized solution is computed via truncated Singular Value Decomposition, thus showing that the answer to the question actually depends on the choice of the inverse method. In addition, we will show that a one-step approach relying on a mathematical model directly relating the measured data to the unknown cross-spectrum should be preferred. In particular, a preliminary analysis performed in this paper shows that the one-step approach enhances the filtering effectiveness of regularization with respect to the standard two-step approach.

The structure of the article is as follows: in Section 2 we provide the general definitions and formalize the main question of the paper. In Section 3 we express the reconstructions errors in terms of the filter factors and provide an interpretation. Section 4 contains the main results of our work: we show that the optimal regularization parameters for reconstruction of $\mbf{x}(t)$ and $\mbf{S}^{\mbf{x}}(f)?$ are generally different and that this difference depends on the inversion method. In Section 5 we show how the filter factors of the two--step approach have a jittering behaviour, while those of a possible one--step approach would be smooth. Our conclusions will be offered in Section 6, together with the discussion of possible directions for future work.

%%%%%%%%%%%%%%%%%%%%%%%%%%%%%%%%%%%%%%%%%%%%%%%%%%%%%%%%%%%%%%%%%%%%%%%%%%%%%%%%%%%%%%%%
\section{Definition of the problem}

Let $\mbf{X}(t) = (X_1(t), \dots, X_N(t))^T$ be a multivariate, stationary stochastic process whose realizations $\mbf{x}(t)$ cannot be observed directly; indirect information on $\mbf{x}(t)$ can be obtained by observing the realizations $\mbf{y}(t)$ of the process $\mbf{Y}(t)$, which is a noisy linear mixture of $\mbf{X}(t)$
\begin{equation}\label{eq:model_eq}
\mbf{Y}(t) = \mbf{G} \mbf{X}(t) + \mbf{N}(t)
\end{equation}
where $\mbf{G}$ is an $M \times N$ \textit{forward} matrix, with $M = \dim (\mbf{Y}(t))$, $N = \dim (\mbf{X}(t))$, and $\mbf{N}(t)$ is the measurement noise process, which is assumed to be a zero--mean Gaussian process independent from $\mbf{X}(t)$. For ease of presentation, we  further assume $M \leq N$ and $\mbf{G}$ to be a full-rank matrix so that  all its singular values are strictly positive; however, the results below can be easily extended to the general case.

We consider the case where one is interested in reconstructing the cross--spectrum of the process $\mbf{X}(t)$, that contains information on the statistical dependencies between the different components of the signal. The cross--spectrum is a one--parameter family of $N \times N$ matrices $\mbf{S}^{\mbf{X}}(f)$, whose $(j,k)$--th element is defined as 
\begin{equation} \label{eq:def_SX}
S^\mbf{X}_{j,k}(f)=\lim_{T\rightarrow+\infty}\frac{1}{T} E [\hat{X}_j(f,T) \hat{X}_k(f,T)^*]
\end{equation}
%S^X_{i,j}(f)=\lim_{T\rightarrow+\infty}\frac{1}{T} E [Z_k(f) Z_j(f)^*],
where $\hat{X}_j(f,T)$ is the Fourier transform of $X_j(t)$ over the interval $[0,T]$, defined as 
\begin{equation} 
\hat{X}_j(f,T)=\int_{0}^{T} X_j(t)e^{-2\pi ift}dt 
\end{equation}
and $X^*$ is the complex conjugate of $X$ \cite{be_pi11}.

In this work we consider the case when the reconstruction of the cross--spectrum is done in a two--step process: 
\begin{itemize}
\item[(i)] First, a regularized estimate $\mbf{x}_{\lambda}(t)$ of $\mbf{x}(t)$ is computed as
%\begin{equation}\label{eq:filter_factors}
%     \mbf{x}_{\Phi}(t) = \mbf{W}_{\lambda} \mbf{y}(t) = \mbf{V} \mbf{\Phi} \Sigma^{^{\dagger}} \mbf{U}^t \mbf{y}(t) ~~~,
%\end{equation}
\begin{equation}\label{eq:filter_factors}
     \mbf{x}_{\lambda}(t) = \mbf{W}_{\mbf{\lambda}} \mbf{y}(t) = \mbf{V} \mbf{\Phi}(\lambda)\mbf{\Sigma}^{^{\dagger}} \mbf{U}^t \mbf{y}(t) ~~~,
\end{equation}
where $\mbf{G} = \mbf{U} \mbf{\Sigma} \mbf{V}^t $ is the singular value decomposition (SVD) of the forward matrix; $\mbf{\Sigma}^{^{\dagger}}$ is the pseudo-inverse of $\mbf{\Sigma}$ and $\mbf{\Phi}(\lambda) = \textup{diag}( \varphi_1(\lambda), \dots, \varphi_{M}(\lambda) ) \in R^{N \times N}$ are the \textit{filter factors} \cite{hansen05}, which are functions of one (or more) \textit{regularization parameter(s)} $\lambda$. 

\item[(ii)]
Then, an estimate of the cross--spectrum is obtained from these reconstructed time--series using the Welch's method \cite{welch1967use}, which consists in partitioning the data in $P$ overlapping segments $\{\mbf{x}_{\lambda}^p(t)\}_{p=1,\dots,P}$, computing $\hat{\mbf{x}}_{\lambda}^p(f)$, the Discrete Fourier Transform of the signals multiplied by a window function $w(t)$, and then averaging these modified periodograms:

\begin{equation}\label{eq:welch}
    \mbf{S}^{\mbf{x}_{\lambda}}(f)=\frac{L}{PW}\sum_{p=1}^{P} \hat{\mbf{x}}_{\lambda}^p(f) \hat{\mbf{x}}_{\lambda}^p(f)^*, \hspace{1cm} f=0, \dots, L-1, 
\end{equation}
where $L$ is the length of each segment and $W=\frac{1}{L}\sum_{t=0}^{L-1}w(t)^2$.
\end{itemize}

This two--step approach is largely used, e.g., in connectivity estimation from M/EEG data, where the estimated cross-spectrum is typically used to compute a large pool of connectivity metrics such as coherence \cite{nunez99}, imaginary part of coherency \cite{nolte04}, phase slope index \cite{nolte08}. Naturally but crucially, this estimate depends on the choice of the regularization method, as well as on the choice of $\lambda$, which modulates the degree of regularization of the estimate $\mbf{x}_{\mbf{\lambda}}$.

In this work we will mainly focus on two regularization methods, namely truncated SVD (tSVD) and Tikhonov regularization. The reason of this specific choice is as follows: the Tikhonov method is one of the more commonly employed methods for connectivity estimation in M/EEG, and it has been used by Hincapi\'e and colleagues in the paper that motivated this study \cite{hietal16}; tSVD is a method which is easy to deal with analytically, and in addition it will provide a different result than the Tikhonov method, thus showing that the answer to the main question of this study is method--dependent. 

Henceforth, $\mbf{u}_i$ and $\mbf{v}_i$ will denote the $i$-th column of matrices $\mbf{U}$ and $\mathbf{V}$, respectively. tSVD relies on the 1--parameter family of regularized estimates
\begin{equation}
	\label{eq:tsvd_estimate}
    \mbf{x}_{\lambda}(t) = \sum_{i=1}^{\lambda} \frac{\mbf{u}_i^t \mbf{y}(t)}{\sigma_i}\mbf{v}_i~~~~~ \lambda \in \left\{1, \dots M \right\} ~~~,
\end{equation}
which are obtained from equation (\ref{eq:filter_factors}) by setting 
\begin{equation}
    \varphi_i(\lambda) = \left\{
    \begin{array}{ccc}
    1 & \textrm{if} & i \leq \lambda \\
    0 & \textrm{if} & i > \lambda \\
    \end{array} \right. ~~~ .
\end{equation}
Tikhonov estimates are defined as
\begin{equation}
	\label{eq:tikhonov_estimate}
    \mbf{x}_{\lambda}(t) = \sum_{i=1}^{M} \frac{\sigma_i^2}{\sigma_i^2 + \lambda} \frac{\mbf{u}_i^t \mbf{y}(t)}{\sigma_i}\mbf{v}_i~~~~~ \lambda \geq 0 ~~~,
\end{equation} 
which are obtained from equation (\ref{eq:filter_factors}) by setting $\varphi_i(\lambda) = \frac{\sigma_i^2}{\sigma_i^2+\lambda}$. From now on, for semplicity, we omit the dependence of $\mbf{\Phi}$ and $\varphi_i$ from $\lambda$. Also note that in the two methods the parameter $\lambda$ assumes values in different sets. In tSVD $\lambda$ determines the number of retained SVD components, and therefore assumes integer values in $\{1,\dots,M\}$, where a small $\lambda$ value means few retained components and thus an high level of regularization. In Tikhonov regularization $\lambda$ determines the strength with which each SVD component contributes to the solution; in this case $\lambda$ assumes continuous values in $[0,+\infty)$ and the higher the value the higher the degree of regularization. 

For the two mentioned methods, we consider the problem of the optimal choice of the regularization parameter $\lambda$ for reconstruction of the cross--spectrum. We define optimality through the minimization of the norm of the discrepancy, specifically we define the two following optimal values for the parameter.
\begin{definition}\label{def:opt_values}
Consider the regularized solution (\ref{eq:filter_factors}) and the cross--spectrum estimate (\ref{eq:welch})  associated to a realization of equation (\ref{eq:model_eq}); we define the optimal parameter for the reconstruction of $\mbf{x}(t)$ 
\begin{equation} \label{eq:err_x}
    \lambda_{\mbf{x}}^* = \arg \min_{\lambda}\varepsilon_{\mbf{x}}(\lambda) ~~~ \textrm{with} ~~~  \varepsilon_{\mbf{x}}(\lambda) = \sum_t \left\| \mbf{x}_{\lambda}(t) - \mbf{x}(t) \right\|_2^2 ~~~,
\end{equation}
and the optimal parameter for the reconstruction of $\mbf{S}^{\mbf{x}}(f)$
\begin{equation}\label{eq:err_S_x}
    \lambda_{\mbf{S}}^* = \arg \min_{\lambda} \varepsilon_{\mbf{S}}(\lambda) ~~~ \textrm{with} ~~~ \varepsilon_{\mathbf{S}}(\lambda) = \sum_f \left\| \mbf{S}^{\mbf{x}_{\lambda}}(f) - \mbf{S}^{\mbf{x}}(f) \right\|_F^2~~~,
\end{equation}
where $\|\cdot\|_2$ and $\|\cdot\|_F$ are the $L^2$-norm and the Frobenius norm, respectively; $\varepsilon_{\mbf{x}}(\lambda)$ and $\varepsilon_{\mbf{S}}(\lambda)$ will be called reconstruction errors.
\end{definition}

In the following sections we shall answer the following question: does the optimal regularization parameter for reconstruction of $\mbf{x}(t)$, $\lambda_{\mbf{x}}^*$, coincide with the optimal regularization parameter for reconstruction of $\mbf{S}^\mbf{x}(f)$, $\lambda_{\mbf{S}}^*$?

%%%%%%%%%%%%%%%%%%%%%%%%%%%%%%%%%%%%%%%%%%%%%%%%%%%%%%%%%%%%%%%%%%%%%%%%%%%%%%%%%%%%%%%
%\section{Resolution matrices and filter factors analysis of $\varepsilon_{\mbf{x}}(\lambda)$ and $\varepsilon_{\mbf{S}}(\lambda)$}
\section{Reconstruction errors with filter factors}
In this section we aim at deriving an explicit formulation of $\varepsilon_{\mbf{x}}(\lambda)$ and $\varepsilon_{\mbf{S}}(\lambda)$ in terms of the filter factors $\mbf{\Phi}$. To this end we observe that from equations (\ref{eq:model_eq}) and (\ref{eq:filter_factors}) we can derive the following relationship between the true and the reconstructed signal:
\begin{equation}\label{discr_x}
    \mbf{x}_{\lambda}(t) = \mbf{R}_{\lambda} \mbf{x}(t) + \mbf{W}_{\lambda} \mbf{n}(t)
\end{equation}
where $\mbf{R}_{\lambda} = \mbf{W}_{\lambda} \mbf{G}$ is the resolution matrix \cite{dePeMe96, hansen05}.

A similar relationship between the true and the estimated cross-spectrum can be derived by substituting equation (\ref{discr_x}) into definition (\ref{eq:welch}) and by exploiting the linearity of the Discrete Fourier Transform:
\begin{eqnarray}\label{eq:discr_S}
\eqalign{
    \mbf{\mathcal{S}}^{\mbf{x}_{\lambda}}(f) & = \left( \mbf{R}_{\lambda} \otimes  \mbf{R}_{\lambda} \right) \mbf{\mathcal{S}}^{\mbf{x}}(f) + 
    \left( \mbf{W}_{\lambda} \otimes \mbf{W}_{\lambda} \right) \mbf{\mathcal{S}}^{\mbf{n}}(f)\\
    & =  \left( \mbf{W}_{\lambda} \otimes \mbf{R}_{\lambda} \right) \mbf{\mathcal{S}}^{\mbf{x}\mbf{n}}(f) + \left( \mbf{R}_{\lambda} \otimes \mbf{W}_{\lambda} \right) \mbf{\mathcal{S}}^{\mbf{n}\mbf{x}}(f)} ~~~,
\end{eqnarray}
where $\mbf{\mathcal{S}}^{\mbf{x}}(f)$ is the vector obtained by concatenating the columns of the matrix $\mbf{S}^{\mbf{x}}(f)$,  $\otimes$ is the Kronecker product, and $\mbf{S}^{\mbf{x}\mbf{n}}$ is the cross--spectrum between $\mbf{x}$ and $\mbf{n}$, i.e., following the notation in equation (\ref{eq:welch}),  $\mbf{S}^{\mbf{x}\mbf{n}}(f)=\frac{L}{PW}\sum_{p=1}^{P} \hat{\mbf{x}}^p(f) \hat{\mbf{n}}^p(f)^*$. 

Since $\mbf{X}(t)$ and $\mbf{N}(t)$ are independent, $\mbf{\mathcal{S}}^{\mbf{x}\mbf{n}}(f)$ and $\mbf{\mathcal{S}}^{\mbf{n}\mbf{x}}(f)$ are negligible provided that enough data time-points are available. Hence from definition \ref{def:opt_values} it follows
\begin{equation}\label{eq:eps_x_rm}
    \varepsilon_{\mbf{x}}(\lambda) = \sum_t \left\| \left( \mbf{R}_{\lambda} - \mbf{I}_N \right) \mbf{x}(t) + \mbf{W}_{\lambda} \mbf{n}(t) \right\|_2^2
\end{equation}
\begin{equation}\label{eq:eps_S_rm}
    \varepsilon_{\mathbf{S}}(\lambda) = \sum_f \left\| \left( \mbf{R}_{\lambda} \otimes  \mbf{R}_{\lambda} - \mbf{I}_{N^2}\right) \mbf{\mathcal{S}}^{\mbf{x}}(f) + 
    \left( \mbf{W}_{\lambda} \otimes \mbf{W}_{\lambda} \right) \mbf{\mathcal{S}}^{\mbf{n}}(f) \right\|_2^2
\end{equation}
where $\mbf{I}_N$ is the identity matrix of size $N \times N$.
\\

\begin{proposition}\label{prop:errors}
The reconstruction errors defined in (\ref{eq:err_x}) and (\ref{eq:err_S_x}) are given by:
\begin{eqnarray}
\fl
    \varepsilon_{\mbf{x}}(\lambda) = \sum_t \sum_{i=M+1}^N \left(\mbf{v}_i^t \mbf{x}(t)\right)^2 + \sum_t \sum_{i=1}^M \left[ \left( \varphi_i - 1 \right)^2 \left(\mbf{v}_i^t \mbf{x}(t)\right)^2 + \varphi_i^2 \frac{\left(\mbf{u}_i^t \mbf{n}(t) \right)^2}{\sigma_i^2}  \right]
    \label{eq:eps_x_filt}
\end{eqnarray}
and
\begin{eqnarray}
\fl \eqalign{
\varepsilon_{\mathbf{S}}(\lambda) & = 
\sum_f \sum_{i\geq M+1\ or \atop j\geq M+1} \left| (\mbf{v}_i \otimes \mbf{v}_j)^t \mathcal{S}^{\mbf{x}}(f) \right|^2 
+ \sum_f \sum_{i,j=1}^M \Bigg[ \left(\varphi_i \varphi_j - 1 \right)^2 \left| (\mbf{v}_i \otimes \mbf{v}_j)^t \mathcal{S}^{\mbf{x}}(f) \right|^2 \\
& + \left(\frac{\varphi_i \varphi_j}{\sigma_i \sigma_j}\right)^2 \left|(\mbf{u}_i \otimes \mbf{u}_j)^t \mathcal{S}^{\mbf{n}}(f)\right|^2 
+ 2 \left(\varphi_i \varphi_j - 1 \right) \frac{\varphi_i \varphi_j}{\sigma_i \sigma_j} Re\left(\overline{(\mbf{v}_i \otimes \mbf{v}_j)^t \mathcal{S}^{\mbf{x}}(f)} (\mbf{u}_i \otimes \mbf{u}_j)^t \mathcal{S}^{\mbf{n}}(f) \right)\Bigg]}\label{eq:eps_S_filt}
\end{eqnarray}
\end{proposition}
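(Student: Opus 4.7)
The plan is to diagonalise every operator appearing in (\ref{eq:eps_x_rm})--(\ref{eq:eps_S_rm}) using the SVD basis, and then expand the squared norms term by term, exploiting the orthonormality of $\{\mathbf{v}_i\}$ (for the signal) and of $\{\mathbf{v}_i\otimes\mathbf{v}_j\}$ (for the vectorised cross--spectrum). Since $\mathbf{G}=\mathbf{U}\mathbf{\Sigma}\mathbf{V}^t$, the definitions of $\mathbf{W}_\lambda$ and $\mathbf{R}_\lambda=\mathbf{W}_\lambda \mathbf{G}$ give the dyadic expansions
\[
\mathbf{W}_\lambda=\sum_{i=1}^{M}\frac{\varphi_i}{\sigma_i}\mathbf{v}_i\mathbf{u}_i^t,\qquad \mathbf{R}_\lambda=\sum_{i=1}^{M}\varphi_i\,\mathbf{v}_i\mathbf{v}_i^t,
\]
so $\mathbf{R}_\lambda-\mathbf{I}_N=\sum_{i=1}^{M}(\varphi_i-1)\mathbf{v}_i\mathbf{v}_i^t-\sum_{i=M+1}^{N}\mathbf{v}_i\mathbf{v}_i^t$, which cleanly splits the contribution of the range of $\mathbf{G}^t$ from its null component.

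For $\varepsilon_{\mathbf x}(\lambda)$ I would substitute this decomposition into (\ref{eq:eps_x_rm}) and project $(\mathbf{R}_\lambda-\mathbf{I}_N)\mathbf{x}(t)+\mathbf{W}_\lambda\mathbf{n}(t)$ on the orthonormal basis $\{\mathbf{v}_i\}$. Parseval gives a sum of $N$ squared coefficients: the $i>M$ terms yield $(\mathbf{v}_i^t\mathbf{x}(t))^2$ immediately, while for $i\le M$ the coefficient is $(\varphi_i-1)\mathbf{v}_i^t\mathbf{x}(t)+\varphi_i\sigma_i^{-1}\mathbf{u}_i^t\mathbf{n}(t)$, whose square expands into three pieces. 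Summing over $t$, the mixed piece contains $\sum_t (\mathbf{v}_i^t\mathbf{x}(t))(\mathbf{u}_i^t\mathbf{n}(t))$, which vanishes in the same large--sample sense already used to drop $\mathcal{S}^{\mathbf{xn}}$ in (\ref{eq:discr_S}) (independence and zero--mean of $\mathbf{X}$ and $\mathbf{N}$). What remains is exactly (\ref{eq:eps_x_filt}).

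For $\varepsilon_{\mathbf S}(\lambda)$ I would use the Kronecker identity $(A\otimes B)(C\otimes D)=(AC)\otimes(BD)$ to obtain
\[
\mathbf{R}_\lambda\otimes\mathbf{R}_\lambda=(\mathbf{V}\otimes\mathbf{V})(\tilde{\mathbf{\Phi}}\otimes\tilde{\mathbf{\Phi}})(\mathbf{V}\otimes\mathbf{V})^t,\qquad \mathbf{W}_\lambda\otimes\mathbf{W}_\lambda=(\mathbf{V}\otimes\mathbf{V})(\mathbf{\Phi}\mathbf{\Sigma}^{\dagger}\otimes\mathbf{\Phi}\mathbf{\Sigma}^{\dagger})(\mathbf{U}\otimes\mathbf{U})^t,
\]
where $\tilde{\mathbf{\Phi}}$ is padded with zeros on positions $M+1,\dots,N$. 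Since $\{\mathbf{v}_i\otimes\mathbf{v}_j\}_{i,j=1}^N$ is orthonormal and $\|\mathbf{S}\|_F^2=\|\mathcal{S}\|_2^2$, I can read off the $(i,j)$--coefficient of the vector inside (\ref{eq:eps_S_rm}): for $i>M$ or $j>M$ it equals $-(\mathbf{v}_i\otimes\mathbf{v}_j)^t\mathcal{S}^{\mathbf{x}}(f)$, producing the first sum in (\ref{eq:eps_S_filt}); for $i,j\le M$ it equals $(\varphi_i\varphi_j-1)(\mathbf{v}_i\otimes\mathbf{v}_j)^t\mathcal{S}^{\mathbf{x}}(f)+(\varphi_i\varphi_j/\sigma_i\sigma_j)(\mathbf{u}_i\otimes\mathbf{u}_j)^t\mathcal{S}^{\mathbf{n}}(f)$. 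Taking $|\cdot|^2$ of this complex scalar gives the three summands in the second line, the mixed one appearing as $2\,\mathrm{Re}(\overline{A}B)$.

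The main conceptual point, and the step to verify carefully, is why the cross term is dropped in the signal error but retained in the cross--spectrum error: in (\ref{eq:eps_x_filt}) one is summing pointwise products of two realisations that are independent and zero--mean, whereas in (\ref{eq:eps_S_filt}) the mixed term is a deterministic product of the already--averaged spectra $\mathcal{S}^{\mathbf{x}}(f)$ and $\mathcal{S}^{\mathbf{n}}(f)$, which cannot be neglected. The only genuinely delicate bookkeeping is tracking the range/null decomposition via the padding of $\tilde{\mathbf{\Phi}}$, so that the two terms in the first sums of (\ref{eq:eps_x_filt}) and (\ref{eq:eps_S_filt}) collect precisely the indices with $i>M$ (respectively $i>M$ or $j>M$); once this is in place the rest is expansion of a square.
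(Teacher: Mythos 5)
Your proposal is correct and takes essentially the same route as the paper's proof: the same dyadic SVD expansions of $\mathbf{W}_{\lambda}$ and $\mathbf{R}_{\lambda}-\mathbf{I}_N$ (and their Kronecker counterparts), followed by expansion of the squared norms in the orthonormal bases $\{\mathbf{v}_i\}$ and $\{\mathbf{v}_i\otimes\mathbf{v}_j\}$, with the signal--noise cross term in $\varepsilon_{\mathbf{x}}$ dropped by independence. Your closing remark on why the mixed term is negligible in $\varepsilon_{\mathbf{x}}$ but genuinely present in $\varepsilon_{\mathbf{S}}$ only makes explicit what the paper leaves implicit.
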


\begin{proof}
To prove equation (\ref{eq:eps_x_filt}) we observe that
\begin{displaymath}
\mbf{W}_{\lambda} = \mbf{V} \mbf{\Phi} \mbf{\Sigma}^{^{\dagger}} \mbf{U}^t = \sum_{i=1}^M \mbf{v}_i \frac{\varphi_i}{\sigma_i} \mbf{u}_i^t
\end{displaymath}
and
\begin{displaymath}
\mbf{R}_{\lambda} - \mbf{I}_N = \mbf{V} \mbf{\Phi} \mbf{\Sigma}^{^{\dagger}}\mbf{\Sigma} \mbf{V}^t - \mbf{I}_N = \sum_{i=1}^M \mbf{v}_i (\varphi_i -1 ) \mbf{v}_i^t - \sum_{i=M+1}^N \mbf{v}_i \mbf{v}_i^t.
\end{displaymath}
Then the thesis follows from equation (\ref{eq:eps_x_rm}) by exploiting the orthonormality of $\mbf{V}$ and the independence between processes $ \mbf{X}(t)$ and $\mbf{N}(t) $.\\
Analogously, equation (\ref{eq:eps_S_filt}) follows from equation (\ref{eq:eps_S_rm}) by observing
\begin{displaymath}
\fl 
\mbf{W}_{\lambda} \otimes \mbf{W}_{\lambda} = \left( \mbf{V} \otimes \mbf{V} \right) \left( \mbf{\Phi} \mbf{\Sigma}^{^{\dagger}} \otimes \mbf{\Phi} \mbf{\Sigma}^{^{\dagger}} \right) \left(\mbf{U} \otimes \mbf{U} \right)^t = \sum_{i,j=1}^M (\mbf{v}_i \otimes \mbf{v}_j) \frac{\varphi_i \varphi_j}{\sigma_i \sigma_j} (\mbf{u}_i \otimes \mbf{u}_j )^t 
\end{displaymath}
and
\begin{displaymath}
\fl 
\eqalign{
\mbf{R}_{\lambda} \otimes \mbf{R}_{\lambda} - \mbf{I}_{N^2} = \left( \mbf{V} \otimes \mbf{V} \right) \left( \mbf{\Phi} \mbf{\Sigma}^{^{\dagger}}\mbf{\Sigma} \otimes \mbf{\Phi} \mbf{\Sigma}^{^{\dagger}}\mbf{\Sigma}\mbf{}- \mbf{I}_{N^2} \right) \left(\mbf{V} \otimes \mbf{V} \right)^t = \\
\sum_{i,j=1}^M (\mbf{v}_i \otimes \mbf{v}_j) (\varphi_i \varphi_j-1) (\mbf{v}_i \otimes \mbf{v}_j )^t - \sum_{i\geq M+1\ or \atop j\geq M+1} (\mbf{v}_i \otimes \mbf{v}_j) (\mbf{v}_i \otimes \mbf{v}_j )^t} 
\end{displaymath}

\end{proof}

\begin{remark}
The expression in (\ref{eq:eps_x_filt}) is a classical result in regularization theory \cite{hansen05}, in which the reconstruction error is expressed in terms of three distinct components. The first component is the norm of the projection of the original signal onto the kernel of $\mbf{G}$, i.e. the part of the signal that cannot be reconstructed. The second term is the \textit{regularization error}, i.e. the error introduced by regularization itself; indeed, this term vanishes when the value of all the filters is one. The last term is the \textit{perturbation error}, i.e. the backprojection of stochastic noise components onto the reconstructed signal, that regularization tries to reduce.
\end{remark}

\begin{remark} 
Expression (\ref{eq:eps_S_filt}) is the analogue of (\ref{eq:eps_x_filt}) for the cross--spectrum estimated with the two--step approach. To the best of our knowledge this expression is novel and has never been studied. The reconstruction error $\varepsilon_{\mathbf{S}}(\lambda)$ here is made of four distinct components: three of them have the same interpretation of those appearing in $\varepsilon_{\mathbf{x}}(\lambda)$; the fourth term is a non--vanishing mixed term, that depends on both the signal and the noise spectra; as we shall see below, this term turns out to be negative at least in some special cases. 
\end{remark}

\section{The relationship between the optimal regularization parameters: two case studies}

We will now address the main question posed in the introduction: does the optimal regularization parameter for reconstruction of the time--series coincide with the optimal regularization parameter for reconstruction of the cross--spectrum?
As we shall see, the answer depends on the specific choice of the inverse method, i.e. of the form of the filter factors. Here we study first the case of tSVD, and then the case of the Tikhonov method.

In order to proceed analytically, in this section we make the further assumption that both the signal and the noise are white--noise Gaussian processes, with covariance matrices $\omega^2 \mbf{I}_M$ and $\alpha^2 \mbf{I}_N$, respectively. The Gaussian assumption is often not too far fetched; in M/EEG, particularly, it is widely used and, even though perhaps the data distribution is not exactly Gaussian, the Gaussian assumption is implicit (when not explicit) in the vast majority of connectivity studies \cite{nolte2019mathematical}. The white--noise assumption, on the other hand, is stronger, as it implies that there is no temporal structure in the signal: we will come back to this point in the Discussion.

\subsection{Truncated SVD}
When tSVD is employed, by substituting the values of the corresponding filter factors into equations (\ref{eq:eps_x_filt}) and (\ref{eq:eps_S_filt}) we get the following corollary of Proposition \ref{prop:errors}.

\begin{corollary}
Consider the tSVD estimate $\mbf{x}_{\lambda}(t)$ given by equation (\ref{eq:tsvd_estimate}), with regularization parameter
$\lambda \in \left\{1, \dots, M \right\}$. Then
\begin{equation}\label{eq:eps_x_tsvd}
    \varepsilon_{\mbf{x}}(\lambda) = \sum_t \sum_{i=\lambda+1}^{N} \left(\mbf{v}_i^t \mbf{x}(t)\right)^2 + \sum_t \sum_{i=1}^{\lambda} \frac{\left(\mbf{u}_i^t \mbf{n}(t) \right)^2}{\sigma_i^2} 
\end{equation}
and
\begin{equation}\label{eq:eps_S_tsvd}
\fl
     \varepsilon_{\mathbf{S}}(\lambda) =
    \sum_f \sum_{i\geq \lambda+1\ or \atop j\geq \lambda+1} \left| (\mbf{v}_i \otimes \mbf{v}_j)^t \mathcal{S}^{\mbf{x}}(f) \right|^2  + \sum_f \sum_{i, j= 1}^{\lambda}  \frac{\left|(\mbf{u}_i \otimes \mbf{u}_j)^t \mathcal{S}^{\mbf{n}}(f)\right|^2}{\sigma_i^2 \sigma_j^2}
\end{equation}
\end{corollary}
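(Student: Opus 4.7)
The plan is to verify the corollary as a direct substitution of the tSVD filter factor values---$\varphi_i = 1$ for $i \leq \lambda$ and $\varphi_i = 0$ for $i > \lambda$---into the formulas (\ref{eq:eps_x_filt}) and (\ref{eq:eps_S_filt}) already established in Proposition~\ref{prop:errors}. No new analysis beyond case bookkeeping on the indices is required, since the white--noise Gaussian assumption does not enter the calculation at this stage.

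First I would handle $\varepsilon_{\mbf{x}}(\lambda)$ by splitting the inner sum $\sum_{i=1}^M$ according to whether $i \leq \lambda$ or $\lambda < i \leq M$. In the former case $\varphi_i=1$, so the regularization factor $(\varphi_i - 1)^2$ annihilates the signal term while $\varphi_i^2 = 1$ leaves the perturbation term $(\mbf{u}_i^t\mbf{n}(t))^2/\sigma_i^2$ intact. In the latter case $\varphi_i = 0$, so only the signal term $(\mbf{v}_i^t\mbf{x}(t))^2$ survives. Merging these surviving signal terms with the pre-existing null--space sum $\sum_{i=M+1}^N (\mbf{v}_i^t \mbf{x}(t))^2$ produces the single telescoped sum $\sum_{i=\lambda+1}^N (\mbf{v}_i^t\mbf{x}(t))^2$, which is exactly equation (\ref{eq:eps_x_tsvd}).

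Next I would treat $\varepsilon_{\mathbf{S}}(\lambda)$ in the same spirit, but now case-splitting on the pair $(i,j) \in \{1,\dots,M\}^2$. When both $i\leq\lambda$ and $j\leq\lambda$ we have $\varphi_i\varphi_j = 1$, so the factor $(\varphi_i\varphi_j-1)^2$ annihilates both the signal term and the mixed term, leaving only the noise term $|(\mbf{u}_i\otimes\mbf{u}_j)^t\mathcal{S}^{\mbf{n}}(f)|^2/(\sigma_i^2\sigma_j^2)$. When at least one of $i,j$ exceeds $\lambda$, the product $\varphi_i\varphi_j$ vanishes, killing the noise and mixed terms and leaving only $|(\mbf{v}_i\otimes\mbf{v}_j)^t\mathcal{S}^{\mbf{x}}(f)|^2$. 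These surviving signal terms then merge with the null--space sum over $\{i\geq M+1\text{ or }j\geq M+1\}$ to yield the compressed sum over $\{i\geq\lambda+1\text{ or }j\geq\lambda+1\}$, reproducing equation (\ref{eq:eps_S_tsvd}).

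The only genuine point requiring care---and really the sole possible stumbling block---is the index bookkeeping in the cross--spectrum case: one must verify that the disjoint union of $\{(i,j)\in\{1,\dots,M\}^2: i>\lambda\text{ or }j>\lambda\}$ with $\{(i,j)\in\{1,\dots,N\}^2: i\geq M+1\text{ or }j\geq M+1\}$ coincides with $\{(i,j)\in\{1,\dots,N\}^2: i\geq\lambda+1\text{ or }j\geq\lambda+1\}$, which is clear because $\lambda\leq M\leq N$. No orthogonality, independence, or convergence argument beyond those already invoked in Proposition~\ref{prop:errors} is needed, and I expect the proof to read as essentially a one--line substitution followed by a short bookkeeping remark.
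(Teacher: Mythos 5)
Your proposal is correct and is exactly the paper's route: the corollary is obtained by plugging the tSVD filter factors $\varphi_i=1$ ($i\leq\lambda$), $\varphi_i=0$ ($i>\lambda$) into equations (\ref{eq:eps_x_filt}) and (\ref{eq:eps_S_filt}) of Proposition \ref{prop:errors}, with the surviving signal terms merging into the sums over $i\geq\lambda+1$ (resp.\ $i\geq\lambda+1$ or $j\geq\lambda+1$), and your observation that the mixed term vanishes in both index regimes and that the white--noise Gaussian assumption is not yet needed matches the paper, which invokes that assumption only in Theorem \ref{theo:tsvd}.
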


\begin{remark}
When regularization is accomplished through tSVD, the mixed term in $\varepsilon_{\mathbf{S}}(\lambda)$ vanishes; this allows us to compute the optimal regularization parameter explicitly.
\end{remark}

\begin{theorem}\label{theo:tsvd}
Let $x_{\lambda}(t)$ be the tSVD estimate as given by equation (\ref{eq:tsvd_estimate}), with regularization parameter $\lambda \in \left\{1, \dots, M \right\}$; assume $\mbf{X}(t)$ and $\mbf{N}(t)$ to be white noise processes with covariance matrices $\omega^2 \mbf{I}_N$ and $\alpha^2 \mbf{I}_{M}$, respectively. Then
\begin{equation}
    \lambda_{\mbf{x}}^* = \lambda_{\mbf{S}}^* = \max \left\{ \lambda \in \{1, \dots, M  \} \ s.t \  \sigma_{\lambda} \geq \frac{\alpha}{\omega} \right\}
\end{equation}
\end{theorem}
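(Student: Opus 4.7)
The plan is to plug the white--noise moments into the preceding Corollary, reducing both $\varepsilon_\mbf{x}(\lambda)$ and $\varepsilon_\mbf{S}(\lambda)$ to elementary functions of $\lambda$, and then minimise them over the finite set $\{1,\dots,M\}$ via a discrete first--order condition: because $\lambda$ is integer valued, the minimiser is simply the largest $\lambda$ for which the forward difference $\varepsilon(\lambda+1)-\varepsilon(\lambda)$ remains non--positive, so the whole argument ultimately reduces to inspecting the sign of a single telescoping term.

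For $\varepsilon_\mbf{x}$ the reduction is direct. Since $\{\mbf{v}_i\}$ is orthonormal and $\mbf{X}(t)$ has covariance $\omega^2\mbf{I}_N$, the scalar components $\mbf{v}_i^t\mbf{x}(t)$ are uncorrelated with variance $\omega^2$, whence $\sum_t(\mbf{v}_i^t\mbf{x}(t))^2\approx T\omega^2$ (exactly so after taking expectations); an identical step gives $\sum_t(\mbf{u}_i^t\mbf{n}(t))^2\approx T\alpha^2$. Substituting into equation (\ref{eq:eps_x_tsvd}) yields $\varepsilon_\mbf{x}(\lambda)\approx T(N-\lambda)\omega^2 + T\alpha^2\sum_{i=1}^{\lambda}\sigma_i^{-2}$, whose forward difference $T(\alpha^2/\sigma_{\lambda+1}^2-\omega^2)$ is non--positive precisely while $\sigma_{\lambda+1}\ge\alpha/\omega$; hence $\lambda_\mbf{x}^*=\max\{\lambda:\sigma_\lambda\ge\alpha/\omega\}$.

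The same programme for $\varepsilon_\mbf{S}$ requires one extra identification, namely $(\mbf{v}_i\otimes\mbf{v}_j)^t\mathcal{S}^{\mbf{x}}(f)=\mbf{v}_j^t\mbf{S}^{\mbf{x}}(f)\mbf{v}_i$, together with the observation that the true cross--spectra $\mbf{S}^{\mbf{X}}(f)=\omega^2\mbf{I}_N$ and $\mbf{S}^{\mbf{N}}(f)=\alpha^2\mbf{I}_M$ are diagonal in \emph{every} orthonormal basis. Replacing the Welch estimates in (\ref{eq:eps_S_tsvd}) by these large--$P$ limits collapses both double sums onto their diagonals and gives $\varepsilon_\mbf{S}(\lambda)\approx F(N-\lambda)\omega^4 + F\alpha^4\sum_{i=1}^{\lambda}\sigma_i^{-4}$. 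Its forward difference $F(\alpha^4/\sigma_{\lambda+1}^4-\omega^4)$ is non--positive exactly when $\sigma_{\lambda+1}^2\ge\alpha^2/\omega^2$, i.e.\ under the \emph{same} threshold as for $\varepsilon_\mbf{x}$, forcing $\lambda_\mbf{S}^*=\lambda_\mbf{x}^*$ and closing the argument.

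The delicate step is the diagonal collapse just invoked: for finite segment count $P$, the off--diagonal Welch entries $(i\neq j)$ vanish in mean but carry nonzero variance of order $1/P$, which injects additional positive contributions into both the signal and noise sums of (\ref{eq:eps_S_tsvd}). Making this rigorous requires either passing to a many--segments limit $P\to\infty$, or a second--moment bookkeeping showing that those residual contributions enter the signal and noise sides with the same dependence on $i,j$, so that the sign change of the forward difference is still pinned at $\sigma_{\lambda+1}=\alpha/\omega$; this is where essentially all the technical work of the proof would sit.
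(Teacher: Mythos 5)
Your proposal is correct and follows essentially the same route as the paper: substitute the white--noise moments (including the diagonal collapse $|(\mbf{v}_i\otimes\mbf{v}_j)^t\mathcal{S}^{\mbf{x}}(f)|^2=\omega^4\delta_{ij}$, $|(\mbf{u}_i\otimes\mbf{u}_j)^t\mathcal{S}^{\mbf{n}}(f)|^2=\alpha^4\delta_{ij}$) into the tSVD corollary, reduce both errors to $(N-\lambda)$ terms plus $\sum_{i\le\lambda}\sigma_i^{-2}$ (resp.\ $\sigma_i^{-4}$) sums, and locate the minimiser through the sign change of the monotone discrete increments, which occurs at the same threshold $\sigma_\lambda\geq\alpha/\omega$ for both. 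The finite--$P$ Welch fluctuation issue you flag is handled in the paper simply by the standing assumption that enough data time--points are available (as in its Theorem~\ref{theo:tsvd} proof), so your extra caveat is a more explicit acknowledgment of the same approximation rather than a departure; only take care that the minimiser is the largest $\lambda$ with $\sigma_\lambda\geq\alpha/\omega$ itself, i.e.\ one step beyond the largest $\lambda$ whose forward difference is non--positive, as your final formula correctly reflects.
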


\begin{proof}
Provided that enough data time-points are available, the result follows from the assumptions on $\mbf{X}(t)$ and $\mbf{N}(t)$ which ensure
\begin{displaymath}
\sum_t \left(\mbf{v}_i^t \mbf{x}(t)\right)^2 = T \omega^2 \quad \quad \sum_{t} \left(\mbf{u}_i^t \mbf{n}(t) \right)^2 = T \alpha^2
\end{displaymath}
and

\begin{displaymath}
\left| (\mbf{v}_i \otimes \mbf{v}_j)^t \mathcal{S}^{\mbf{x}}(f) \right|^2 = \omega^4 \delta_{ij} \quad \quad \left|(\mbf{u}_i \otimes \mbf{u}_j)^t \mathcal{S}^{\mbf{n}}(f)\right|^2 = \alpha^4 \delta_{ij}
\end{displaymath}
where $\delta_{ij}$ is the Kronecker delta.\\

Such approximations allow to further simplify equations (\ref{eq:eps_x_tsvd}) and (\ref{eq:eps_S_tsvd}) that now read
\begin{displaymath}
\varepsilon_{\mbf{x}}(\lambda) = \left( N - \lambda \right) T \omega^2 + T \alpha^2 \sum_{i=1}^{\lambda} \frac{1}{\sigma^2_i}
\end{displaymath}
\begin{displaymath}
\varepsilon_{\mathbf{S}}(\lambda) = \left( N - \lambda \right) L \omega^4 + L \alpha^4 \sum_{i=1}^{\lambda} \frac{1}{\sigma_i^4}
\end{displaymath}
The thesis follows by observing that the increments 
\begin{displaymath}
\varepsilon_{\mbf{x}}(\lambda) - \varepsilon_{\mbf{x}}(\lambda - 1) = - T \omega^2 + \frac{T \alpha^2}{\sigma_{\lambda}^{2}}   
\end{displaymath}
and
\begin{displaymath}
\varepsilon_{\mbf{S}}(\lambda) - \varepsilon_{\mbf{S}}(\lambda - 1) = - L \omega^4 + \frac{L \alpha^4}{\sigma_{\lambda}^{4}} 
\end{displaymath}
are non--decreasing functions of $\lambda$ and thus $\varepsilon_{\mbf{x}}(\lambda)$ and $\varepsilon_{\mbf{S}}(\lambda)$ have a unique minimum at the biggest $\lambda$ for which such increments are negative.
\end{proof}

\subsection{Tikhonov}

We now consider the case when regularization is performed by means of the standard Tikhonov formula.

\begin{corollary}
Let $x_{\lambda}(t)$ be the Tikhonov estimate as given by equation (\ref{eq:tikhonov_estimate}), with regularization parameter $\lambda \geq 0$. Then
\begin{eqnarray}
\fl
\eqalign{
    \varepsilon_{\mbf{x}}(\lambda) & = \sum_t \sum_{i=M+1}^N \left(\mbf{v}_i^t \mbf{x}(t)\right)^2 \\
    & + \sum_t \sum_{i=1}^M \left[ \frac{\lambda^2}{(\sigma^2_i + \lambda)^2} \left(\mbf{v}_i^t \mbf{x}(t)\right)^2 + \frac{\sigma^2_i}{(\sigma^2_i + \lambda)^2} \left(\mbf{u}_i^t \mbf{n}(t) \right)^2  \right] }\label{eq:eps_x_tik}
\end{eqnarray}
and
\begin{eqnarray}
\fl \eqalign{
\varepsilon_{\mathbf{S}}(\lambda) & = \sum_f \sum_{i\geq M+1\ or \atop j\geq M+1} \left| (\mbf{v}_i \otimes \mbf{v}_j)^t \mathcal{S}^{\mbf{x}}(f) \right|^2 + \\
& \sum_f \sum_{i,j=1}^M \Big[ \left(\frac{\sigma^2_i\sigma^2_j}{(\sigma_i^2+\lambda)(\sigma_j^2+\lambda)} - 1 \right)^2 \left| (\mbf{v}_i \otimes \mbf{v}_j)^t \mathcal{S}^{\mbf{x}}(f) \right|^2 \\
& + \frac{\sigma^2_i\sigma^2_j}{(\sigma_i^2+\lambda)^2(\sigma_j^2+\lambda)^2} \left|(\mbf{u}_i \otimes \mbf{u}_j)^t \mathcal{S}^{\mbf{n}}(f)\right|^2   \\
& + 2 \left(\frac{\sigma^2_i\sigma^2_j}{(\sigma_i^2+\lambda)(\sigma_j^2+\lambda)} - 1 \right) \frac{\sigma_i\sigma_j}{(\sigma^2_i+\lambda)(\sigma^2_j+\lambda)} Re\left(\overline{(\mbf{v}_i \otimes \mbf{v}_j)^t \mathcal{S}^{\mbf{x}}(f)} (\mbf{u}_i \otimes \mbf{u}_j)^t \mathcal{S}^{\mbf{n}}(f)\right) \Big]}\label{eq:eps_S_tik}
\end{eqnarray}
\end{corollary}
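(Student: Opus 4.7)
The plan is to derive the corollary by direct substitution into Proposition~\ref{prop:errors}, which has already performed all of the nontrivial work: decomposing the resolution matrix $\mbf{R}_\lambda - \mbf{I}_N$ and the weighting matrix $\mbf{W}_\lambda$ in the SVD basis, handling the Kronecker-product structure of the cross-spectrum, and discarding the negligible signal--noise cross-terms. All that remains is to specialize the general coefficients appearing in (\ref{eq:eps_x_filt}) and (\ref{eq:eps_S_filt}) to the Tikhonov filter $\varphi_i = \sigma_i^2/(\sigma_i^2+\lambda)$.

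For $\varepsilon_{\mbf{x}}(\lambda)$, I would first record the two elementary identities $(\varphi_i - 1)^2 = \lambda^2/(\sigma_i^2+\lambda)^2$ and $\varphi_i^2/\sigma_i^2 = \sigma_i^2/(\sigma_i^2+\lambda)^2$, and then insert them into (\ref{eq:eps_x_filt}). The null-space contribution $\sum_{i=M+1}^{N}(\mbf{v}_i^t\mbf{x}(t))^2$ does not depend on $\lambda$ and passes through unchanged, producing the first line of (\ref{eq:eps_x_tik}); the regularized part $i \leq M$ yields the two $\lambda$-dependent summands on the second line.

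For $\varepsilon_{\mbf{S}}(\lambda)$, the coefficients involve products $\varphi_i \varphi_j = \sigma_i^2\sigma_j^2/[(\sigma_i^2+\lambda)(\sigma_j^2+\lambda)]$, so the three bracketed quantities inside (\ref{eq:eps_S_filt}) specialize as follows: $(\varphi_i\varphi_j - 1)^2$ is obtained by direct subtraction; the noise weight is $(\varphi_i\varphi_j/(\sigma_i\sigma_j))^2 = \sigma_i^2\sigma_j^2/[(\sigma_i^2+\lambda)^2(\sigma_j^2+\lambda)^2]$; and the mixed-term coefficient is $2(\varphi_i\varphi_j - 1)\cdot\sigma_i\sigma_j/[(\sigma_i^2+\lambda)(\sigma_j^2+\lambda)]$. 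Separating off the $\lambda$-independent null-space piece (the $i \geq M+1$ or $j \geq M+1$ sum) and collecting the $(i,j)\in\{1,\dots,M\}^2$ contributions then produces (\ref{eq:eps_S_tik}).

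The only real obstacle is bookkeeping: carefully tracking which index ranges belong to the null-space term (so that the $i\geq M+1$ or $j\geq M+1$ sum is isolated correctly), and respecting the slightly asymmetric structure of the mixed term, whose coefficient carries a single power $\sigma_i\sigma_j$ in the numerator rather than $\sigma_i^2\sigma_j^2$ — the natural source of error when reading off the Tikhonov substitution from (\ref{eq:eps_S_filt}). No genuinely new mathematical content appears; the statement is a direct specialization of Proposition~\ref{prop:errors}.
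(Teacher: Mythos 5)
Your proposal is correct and follows exactly the route the paper intends: the corollary is stated as an immediate specialization of Proposition~\ref{prop:errors}, obtained by inserting $\varphi_i = \sigma_i^2/(\sigma_i^2+\lambda)$ into equations (\ref{eq:eps_x_filt}) and (\ref{eq:eps_S_filt}), which is precisely what you do. Your elementary identities, including the single power $\sigma_i\sigma_j$ in the mixed-term coefficient coming from $2(\varphi_i\varphi_j-1)\,\varphi_i\varphi_j/(\sigma_i\sigma_j)$, all check out, so there is nothing to add.
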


Again we assume that $\mbf{X}(t)$ and $\mbf{N}(t)$ are white--noise Gaussian processes with covariance matrices $\omega^2 \mbf{I}_N$ and $\alpha^2 \mbf{I}_{M}$. Under this assumption equations (\ref{eq:eps_x_tik}) and (\ref{eq:eps_S_tik}) become
\begin{eqnarray}
\fl \label{eq:eps_x_tik_2}
  \varepsilon_{\mbf{x}}(\lambda) = T (N-M) \omega^2 + T \omega^2 \sum_{i=1}^M \frac{\lambda^2}{(\sigma^2_i + \lambda)^2} + T \alpha^2 \sum_{i=1}^M \frac{\sigma^2_i}{(\sigma^2_i + \lambda)^2}   
\end{eqnarray}
and
\begin{eqnarray}
\fl \eqalign{ \label{eq:eps_S_tik_2}
\varepsilon_{\mathbf{S}}(\lambda) & = L (N-M) \omega^4 + L \omega^4 \sum_{i=1}^M \left(\frac{\sigma^4_i}{(\sigma_i^2+\lambda)^2} - 1 \right)^2 + L \alpha^4 \sum_{i=1}^M \frac{\sigma^4_i}{(\sigma_i^2+\lambda)^4}\\
& + 2 L \omega^2 \alpha^2 \sum_{i=1}^M \left(\frac{\sigma^4_i}{(\sigma_i^2+\lambda)^2} - 1 \right) \frac{\sigma^2_i}{(\sigma^2_i+\lambda)^2}} ~~~, 
\end{eqnarray}
where we notice that, as anticipated in the previous section, the fourth addend is negative; this fact suggests that, to the extent that the other terms are comparable to those in the corresponding expression for the tSVD (\ref{eq:eps_S_tsvd}), the reconstruction error generated by the Tikhonov method is smaller than the one generated by tSVD.

By differentiating equations (\ref{eq:eps_x_tik_2}) and (\ref{eq:eps_S_tik_2}) we have
\begin{eqnarray}
\fl
  \frac{d}{d\lambda}\varepsilon_{\mbf{x}}(\lambda) = 2T \left(\omega^2 \lambda - \alpha^2 \right)  \sum_{i=1}^M \frac{\sigma_i^2}{(\sigma^2_i + \lambda)^3}\label{eq:de_eps_x_tik}   
\end{eqnarray}
and
\begin{eqnarray}
\fl \eqalign{
 \frac{d}{d\lambda}\varepsilon_{\mbf{S}}(\lambda) =  4 L \omega^2 \sum_{i=1}^M & \frac{\sigma_i^2}{(\sigma_i^2+\lambda)^5} (\alpha^2 + \sigma_i^2 \omega^2)\cdot \\
 & \cdot\left(\lambda + \sigma_i^2 + \sqrt{\sigma_i^4 + \sigma_i^2 \frac{\alpha^2}{\omega^{2}}} \right) \left(\lambda + \sigma_i^2 - \sqrt{\sigma_i^4 + \sigma_i^2 \frac{\alpha^2}{\omega^{2}}} \right)}~~~. \label{eq:de_eps_S_tik}
\end{eqnarray}

We are now able to prove the following theorem.
\begin{theorem}\label{theo:tik}
Let $x_{\lambda}(t)$ be the Tikhonov estimate as given by equation (\ref{eq:tikhonov_estimate}), with regularization parameter $\lambda \geq 0$; assume $\mbf{X}(t)$ and $\mbf{N}(t)$ to be white--noise Gaussian processes with covariance matrices $\omega^2 \mbf{I}_N$ and $\alpha^2 \mbf{I}_{M}$, respectively. Then
\begin{equation}\label{eq:lam_x_tik}
    \lambda_{\mbf{x}}^* = \frac{\alpha^2}{\omega^2} 
\end{equation}
and
\begin{equation}\label{eq:lam_S_tik}
    \lambda_{\mbf{S}}^*\ <\ \frac{\lambda_\mbf{x}^*}{2}
\end{equation}
\end{theorem}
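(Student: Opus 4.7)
The plan is to exploit the explicit derivatives \eqref{eq:de_eps_x_tik} and \eqref{eq:de_eps_S_tik} to pin down the unique minimisers and then compare their locations.

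First I would handle \eqref{eq:lam_x_tik}. In \eqref{eq:de_eps_x_tik} the sum $\sum_{i=1}^M \sigma_i^2/(\sigma_i^2+\lambda)^3$ is strictly positive for every $\lambda \geq 0$, so the sign of $d\varepsilon_{\mbf{x}}/d\lambda$ coincides with the sign of $(\omega^2 \lambda - \alpha^2)$. Hence $\varepsilon_{\mbf{x}}$ is strictly decreasing on $[0, \alpha^2/\omega^2)$, strictly increasing on $(\alpha^2/\omega^2, \infty)$, and attains its unique minimum at $\lambda_{\mbf{x}}^* = \alpha^2/\omega^2$.

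For \eqref{eq:lam_S_tik} I would work with the factorisation in \eqref{eq:de_eps_S_tik}. For each $i \in \{1,\dots,M\}$ define
\[
\tilde{\lambda}_i := \sqrt{\sigma_i^4 + \sigma_i^2\,\alpha^2/\omega^2} - \sigma_i^2 .
\]
All the other factors appearing in the $i$-th summand of \eqref{eq:de_eps_S_tik} are strictly positive for $\lambda \geq 0$, so the sign of the $i$-th summand equals the sign of $(\lambda - \tilde{\lambda}_i)$. Consequently, for $\lambda > \max_i \tilde{\lambda}_i$ every summand is positive and $d\varepsilon_{\mbf{S}}/d\lambda > 0$; at $\lambda = 0$ every summand is strictly negative (since $\tilde{\lambda}_i > 0$), so $d\varepsilon_{\mbf{S}}/d\lambda < 0$. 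By continuity, a global minimum $\lambda_{\mbf{S}}^*$ exists and must satisfy $\lambda_{\mbf{S}}^* \leq \max_i \tilde{\lambda}_i$.

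It then remains to verify that $\tilde{\lambda}_i < \alpha^2/(2\omega^2)$ for every $i$. Setting $u_i := \alpha^2/(\omega^2 \sigma_i^2) > 0$, the inequality reduces to $\sqrt{1+u_i} < 1 + u_i/2$, which is immediate upon squaring both sides. Combining this with the previous bound yields
\[
\lambda_{\mbf{S}}^* \leq \max_i \tilde{\lambda}_i < \frac{\alpha^2}{2\omega^2} = \frac{\lambda_{\mbf{x}}^*}{2},
\]
which is precisely \eqref{eq:lam_S_tik}. The only genuine conceptual step is recognising the factorisation of the cross-spectrum derivative in \eqref{eq:de_eps_S_tik} that isolates the per-mode critical values $\tilde{\lambda}_i$; once that structure is exposed, the rest of the argument is reduced to the elementary scalar inequality on $\sqrt{1+u}$ and a continuity/sign argument for the derivative.
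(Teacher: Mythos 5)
Your proof is correct and follows essentially the same route as the paper: both arguments read off $\lambda_{\mbf{x}}^*=\alpha^2/\omega^2$ from the sign of \eqref{eq:de_eps_x_tik}, and both use the factorisation of \eqref{eq:de_eps_S_tik} to identify the per-mode critical values $\tilde{\lambda}_i = h(\sigma_i)$ with $h(z)=-z^2+\sqrt{z^4+z^2\alpha^2/\omega^2}$, concluding that $\varepsilon_{\mbf{S}}$ is increasing beyond $\max_i h(\sigma_i)<\alpha^2/(2\omega^2)$. The only cosmetic difference is that you establish the bound $h(\sigma_i)<\alpha^2/(2\omega^2)$ by the elementary squaring inequality $\sqrt{1+u}<1+u/2$, whereas the paper invokes the monotonicity of $h$ and its supremum $\alpha^2/(2\omega^2)$ (illustrated in Figure \ref{fig:h}); your version is, if anything, slightly more self-contained and makes the strictness of \eqref{eq:lam_S_tik} explicit.
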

\begin{proof}
The first statement simply follows from equation (\ref{eq:de_eps_x_tik}) by observing that $\frac{d}{d\lambda}\varepsilon_{\mbf{x}}(\lambda) \geq 0$ if and only if $\lambda \geq \frac{\alpha^2}{\omega^2}$. \\
Instead, equation (\ref{eq:de_eps_S_tik}) implies that $\frac{d}{d\lambda}\varepsilon_{\mbf{S}}(\lambda) > 0$ if 
\begin{equation}\label{eq:condition}
    \lambda\ > - \sigma_i^2 + \sqrt{\sigma_i^4 + \sigma_i^2 \frac{\alpha^2}{\omega^{2}}} ~~~ .
\end{equation}
Consider the function $h: [0, +\infty) \ni z \rightarrow -z^2 + \sqrt{z^4 + z^2 \frac{\alpha^2}{\omega^2}}$. As schematically shown in Figure \ref{fig:h}, $h$ is strictly increasing and bounded above by $\frac{\lambda_\mbf{x}^*}{2} = \frac{\alpha^2}{2\omega^2}.$ As a consequence, the condition (\ref{eq:condition}) is satisfied if $\lambda \geq \frac{\lambda_\mbf{x}^*}{2}$, that means $\varepsilon_{\mbf{S}}(\lambda)$ is strictly increasing in $[\frac{\lambda_\mbf{x}^*}{2}, +\infty)$ and thus inequality (\ref{eq:lam_S_tik}) holds.
\end{proof}

The main interest of Theorem \ref{theo:tik} is that it provides a simple relationship between $\lambda_{\mbf{S}}^*$ and $\lambda_{\mbf{x}}^*$.
However, expression (\ref{eq:de_eps_S_tik}) contains more information about the values of $\lambda_{\mbf{S}}^*$, as stated in the following Theorem.
\begin{figure}
    \centering
    \begin{tikzpicture}[domain=0:9]
      \fill [green, opacity=0.3] (0,0) rectangle ( 9,{-1+sqrt(1+12) });
 	\fill [blue, opacity=0.2] (0,{-4^2+sqrt(4^4+12*4^2) } ) rectangle (9,6.8);
	\draw[ ->] (-0.2,0) -- (9.2,0) node[right] {$z$};
      \draw[->] (0,-0.2) -- (0,7) node[above] {$h(z)$};
      \draw[color=black] plot (\x,{ -\x^2+sqrt(\x^4+12*\x^2) } ) node[right] {$h(z)$};
      \draw[color=black] plot (\x,{6});
      \node [align=left] at (-0.3,6) {$\frac{\lambda_\mbf{x}^*}{2}$}; 
      \draw[-,dashed] (1,0) -- (1, {-1+sqrt(1+12) } ) node  at (1,-0.4) {$\sigma_M$};
      \draw[-,dashed] (4,0) -- (4, {-4^2+sqrt(4^4+12*4^2) } ) node at (4,-0.4) {$\sigma_1$}; 
      \draw[-,dashed] (0,{-1+sqrt(1+12) }) -- (1, {-1+sqrt(1+12) } ) node  at (-0.6,{-1+sqrt(1+12) }) {$h(\sigma_M)$};
      \draw[-,dashed] (0,{-4^2+sqrt(4^4+12*4^2) } ) -- (4, {-4^2+sqrt(4^4+12*4^2) } ) node at (-0.6,{-4^2+sqrt(4^4+12*4^2) } ) {$h(\sigma_1)$}; 
	%\draw [ultra thick] (0,{-1+sqrt(1+12) })--(0,{-4^2+sqrt(4^4+12*4^2) } ) ;
	\node  [dgreen] at (7,1.3) {$\frac{d}{d\lambda}\varepsilon_\mbf{S}(\lambda)<0$};
	\node  [blue] at (7,6.4) {$\frac{d}{d\lambda}\varepsilon_\mbf{S}(\lambda)>0$};
	\node at (1,0) {$|$};
	\node at (2.5,0) {$|$};
	\node at (2.5,-0.4) {$\sigma_3$};
	\node at (3.5,0) {$|$};
    \node at (3.5,-0.4){$\sigma_2$};
	\node at (4,0) {$|$};
	\node at (1.75,-0.4) {$\cdots$};
	\draw [|-] (-1.3,{-4^2+sqrt(4^4+12*4^2) })--(-1.3,4.28);
	\draw [|-] (-1.3,{-1+sqrt(1+12) })--(-1.3,3.5);
	\node at (-1.3,3.89) {$\lambda_\mbf{S}^*$\;?};
\end{tikzpicture}
    \caption{Plot of the function $h(z)$ defined in Theorem \ref{theo:tik}. If $\lambda<h(\sigma_M)$ $\varepsilon_\mbf{S}(\lambda)$ is decreasing (green area), whereas if $\lambda>h(\sigma_1)$ $\varepsilon_\mbf{S}(\lambda)$ is increasing (blue area); therefore the optimal regularization parameter $\lambda_\mbf{S}^*$ lies in the interval $[h(\sigma_M),h(\sigma_1)]$. Moreover, for $\lambda\geq\frac{\lambda_\mbf{x}^*}{2}$ $\varepsilon_\mbf{S}(\lambda)$ is increasing independently from the singular values; this fact leads to the inequality $\lambda_\mbf{S}^*<\frac{\lambda_\mbf{x}^*}{2}$. }
    \label{fig:h}
\end{figure}
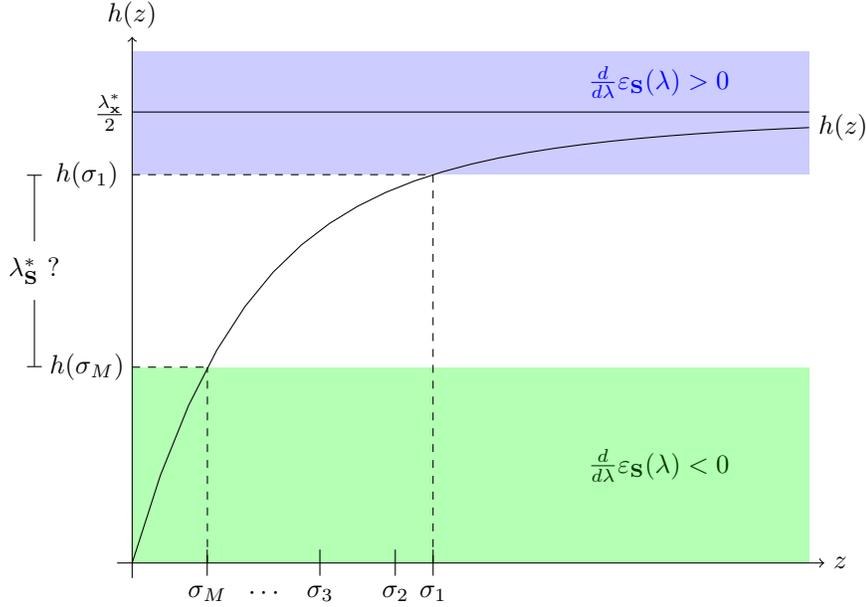
\begin{theorem} \label{theo:interval}
Under the same hypotheses of Theorem \ref{theo:tik}, the value of $\lambda_{\mbf{S}}^*$ belongs to the interval $[h(\sigma_M), h(\sigma_1)]$, where $h(z) = -z^2 + \sqrt{z^4 + z^2 \; \frac{\alpha^2}{\omega^2}}$.
\end{theorem}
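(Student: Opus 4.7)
My plan is to read off the conclusion directly from the factored derivative in (\ref{eq:de_eps_S_tik}) by analysing the sign of each summand term by term. Write each summand of $\frac{d}{d\lambda}\varepsilon_\mathbf{S}(\lambda)$ as the product of four factors: the positive weight $\sigma_i^2/(\sigma_i^2+\lambda)^5$, the positive constant $\alpha^2+\sigma_i^2\omega^2$, the obviously positive factor $\lambda+\sigma_i^2+\sqrt{\sigma_i^4+\sigma_i^2\alpha^2/\omega^2}$, and finally the factor $\lambda+\sigma_i^2-\sqrt{\sigma_i^4+\sigma_i^2\alpha^2/\omega^2}$, which is precisely $\lambda-h(\sigma_i)$ with $h$ as in the statement. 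So the sign of the $i$-th summand coincides with the sign of $\lambda-h(\sigma_i)$.

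The crux is then to show that $h$ is monotonically increasing on $[0,+\infty)$, so that $h(\sigma_M)=\min_i h(\sigma_i)$ and $h(\sigma_1)=\max_i h(\sigma_i)$ (recall the singular values are ordered $\sigma_1\geq\dots\geq\sigma_M$). A short calculation of $h'(z)$ with $c:=\alpha^2/\omega^2$ gives, after rationalisation, a numerator equal to $2z^2+c-2z\sqrt{z^2+c}$, which is positive because squaring yields $(2z^2+c)^2-4z^2(z^2+c)=c^2>0$. This is the only non-trivial step in the argument, but it is really a one-line check.

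From the monotonicity of $h$, the sign analysis splits the $\lambda$-axis into three regions. If $\lambda<h(\sigma_M)$ then $\lambda<h(\sigma_i)$ for every $i$, so every summand of $\frac{d}{d\lambda}\varepsilon_\mathbf{S}(\lambda)$ is strictly negative and hence so is the whole derivative; symmetrically, if $\lambda>h(\sigma_1)$ then every summand is strictly positive and the derivative is strictly positive. Consequently $\varepsilon_\mathbf{S}$ is strictly decreasing on $[0,h(\sigma_M))$ and strictly increasing on $(h(\sigma_1),+\infty)$, so any minimiser must lie in $[h(\sigma_M),h(\sigma_1)]$. Since $\varepsilon_\mathbf{S}$ is smooth in $\lambda\geq 0$ and blows up neither at the endpoints nor at infinity on this compact interval, a global minimum is attained there, which gives $\lambda_\mathbf{S}^*\in[h(\sigma_M),h(\sigma_1)]$ and completes the proof. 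I do not expect any serious obstacle beyond the monotonicity check for $h$; the rest is a term-by-term sign chase that mirrors the reasoning already used in the proof of Theorem \ref{theo:tik}.
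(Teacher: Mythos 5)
Your proof is correct and follows essentially the same route as the paper: a term-by-term sign analysis of the factored derivative (\ref{eq:de_eps_S_tik}), observing that each summand has the sign of $\lambda-h(\sigma_i)$ and invoking the monotonicity of $h$ to conclude that the derivative is negative for $\lambda<h(\sigma_M)$ and positive for $\lambda>h(\sigma_1)$. The only difference is that you check $h'>0$ explicitly (a correct one-line computation), whereas the paper merely asserts the monotonicity with reference to Figure \ref{fig:h}; this is added detail rather than a different argument.
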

\begin{proof}
As schematically shown in Figure \ref{fig:h}, when $\lambda>h(\sigma_1)$, all the addends in (\ref{eq:de_eps_S_tik}) are positive and thus $\frac{d}{d\lambda}\varepsilon_{\mbf{x}}(\lambda)$ is positive; on the other hand, when $\lambda<h(\sigma_M)$ the derivative  $\frac{d}{d\lambda}\varepsilon_{\mbf{x}}(\lambda)$ is negative as all the addends are negative.
\end{proof}

\begin{remark}
Theorem \ref{theo:interval} also gives information on the limiting behaviour of $\lambda_{\mbf{S}}^*$ as $\lambda_{\mbf{x}}^* = \frac{\alpha^2}{\omega^2}$ approaches very small or very large values. In the no--noise scenario, when $\lambda_{\mbf{x}}^* \sim 0$, $\lambda_{\mbf{S}}^*$ grows approximately linearly with $\lambda_{\mbf{x}}^*$. The other boundary is however more interesting. Indeed, when $\lambda_{\mbf{x}}^* \rightarrow \infty$ the extremes of the interval $h(\sigma_1)$ and $h(\sigma_M)$ grow with the same order of $\sqrt{\lambda_{\mbf{x}}^*}$. Therefore, when noise gets larger not only $\lambda_{\mbf{S}}^*$ is smaller than $\lambda_{\mbf{x}}^*$, but it also grows slower.
\end{remark}

\begin{remark}
Theorems \ref{theo:tik} and \ref{theo:interval} imply that, when regularization is accomplished through the Tikhonov method, $\lambda_{\mbf{x}}^*$ does not depend on the forward matrix $\mbf{G}$, while $\lambda_{\mbf{S}}^*$ does. The fact that $\lambda_{\mbf{x}}^*$ does not depend on $\mbf{G}$ may appear counter--intuitive: if the singular values grows, also the effective SNR of the data grow, and then the regularization parameter should become smaller. In fact, the regularization parameter does become smaller \textit{with respect to the data}; in other words, this is the classical behaviour of the optimal regularization parameter, where we are changing the SNR by increasing the strength of the exact signal, rather than decreasing the variance of the noise.
\end{remark}

\begin{remark}
When $M=N$ and $\sigma_1 = \dots = \sigma_M = 1$, the forward matrix $\mbf{G}$ is orthogonal and the inverse problem in equation (\ref{eq:model_eq}) is well--posed. Theorems \ref{theo:tik} and \ref{theo:interval} imply that  $\lambda_{\mbf{S}}^*$ and $\lambda_{\mbf{x}}^*$ are different also under these conditions, as 
\begin{displaymath}
h(\sigma_M) = h(\sigma_1) = -1 + \sqrt{1+\frac{\alpha^2}{\omega^2}} < \frac{\alpha^2}{\omega^2}
\end{displaymath}
Although unrealistic, this case is of particular interest in M/EEG functional connectivity because it corresponds to the ideal case where there is no \textit{cross--talk} or \textit{source--leakage} between sources \cite{ha_etal19}.

Indeed, in this case the resolution matrix is proportional to the identity matrix ($\mbf{R}_{\lambda} = \left(1+\lambda \right)^{-1} \mbf{I}$), i.e. the estimate at one location is not influenced by neural activity at different locations.
Our result shows that also in this ideal case the optimal values of the regularization parameters are different.
\end{remark}

\section{Beyond the two--step approach: Filter factor for a direct estimation of $\mbf{S}^{\mbf{x}}(f)$ from $\mbf{S}^{\mbf{y}}(f)$}\label{apx:single_step}

As an alternative to the two--step approach described so far, one may directly estimate the cross--power spectrum of the unknown $\mbf{S}^{\mbf{x}}(f)$ from that of the data $\mbf{S}^{\mbf{y}}(f)$. Indeed, from equation (\ref{eq:model_eq}) and from the linearity of the Fourier Transform it follows
\begin{equation}\label{eq:pb_cs}
\mathcal{S}^\mbf{y}(f)=(\mbf{G}\otimes\mbf{G})\mathcal{S}^\mbf{x}(f)+\mathcal{S}^\mbf{n}(f) ~~~,
\end{equation}
which describes a linear inverse problem.

Analogously to what we did in the previous Sections for the forward operator $\mbf{G}$, we can introduce the SVD of the forward operator $\mbf{G}\otimes\mbf{G}=(\mbf{U}\otimes\mbf{U})(\mbf{\Sigma}\otimes\mbf{\Sigma})(\mbf{V}\otimes\mbf{V})^t$, up to reordering the elements of $\mbf{\Sigma}\otimes\mbf{\Sigma}$ and the corresponding columns of $\mbf{U}\otimes\mbf{U}$ and $\mbf{V}\otimes\mbf{V}$. We can then express a \textit{one--step} regularized estimate of the cross--spectrum in terms of the SVD and of the filter factors
\begin{eqnarray}
\eqalign{
    \mathcal{S}^\mbf{x}_{\lambda}(f) & = \left(\mbf{V} \otimes \mbf{V} \right) \widetilde{\mbf{\Phi}}(\lambda)\left(\mbf{\Sigma} \otimes \mbf{\Sigma} \right){^{\dagger}} \left(\mbf{U} \otimes \mbf{U} \right)^t \mathcal{S}^\mbf{y}(f) \\
    & = \sum_{i,j}^{M} \widetilde{\varphi}_{i,j}(\lambda) \frac{\left(\mbf{u}_i \otimes \mbf{u}_j \right)^t \mathcal{S}^\mbf{y}(f)}{\sigma_i \sigma_j} \left(\mbf{v}_i \otimes \mbf{v}_j \right)~~~.
    }
\end{eqnarray}

In particular, if Tikhonov regularization is employed, the filter factors read
\begin{equation}
    \widetilde{\varphi}_{i,j}(\lambda) = \frac{\sigma_i^2 \sigma_j^2}{\sigma_i^2\sigma_j^2 + \lambda}
\end{equation}
while in tSVD the components such that the product $\sigma_i \sigma_j$ is below the threshold defined by $\lambda$ are filtered out.
Instead, in the classical two--step approach the filter factors for the estimated cross--spectrum are simply given by the product of the filter factors for the estimated source time--courses, that means each of the singular value $\sigma_i$ and $\sigma_j$ is individually filtered, instead of their product $\sigma_i \sigma_j$. Indeed, the cross--spectrum of the regularized estimate $\mbf{x}_{\lambda}(t)$ in equation  (\ref{eq:filter_factors}) is
\begin{eqnarray}
\eqalign{
    \mathcal{S}^{\mbf{x}_{\lambda}}(f) & = \left( \mbf{V} \otimes \mbf{V} \right) \left( \mbf{\Phi} \mbf{\Sigma}^{^{\dagger}} \otimes \mbf{\Phi} \mbf{\Sigma}^{^{\dagger}} \right) \left(\mbf{U} \otimes \mbf{U} \right)^t \mathcal{S}^\mbf{y}(f) \\
    & = \sum_{i,j}^{M} \varphi_i(\lambda) \varphi_j(\lambda) \frac{\left(\mbf{u}_i \otimes \mbf{u}_j \right)^t \mathcal{S}^\mbf{y}(f)}{\sigma_i \sigma_j} \left(\mbf{v}_i \otimes \mbf{v}_j \right) ~~~.
    }
\end{eqnarray}

As a comparison in Figures \ref{fig:filter_tSVD} and \ref{fig:filter_tikh} we plotted the filter factors $\widetilde{\varphi}_{i,j}(\lambda)$ and $\varphi_i(\lambda)\varphi_j(\lambda)$ for the tSVD and Tikhonov method. The forward matrix $\mbf{G}$ was a standard MEG forward operator based on a realistic, three--layer boundary element method (BEM) head model, publically available within the mne--python software \cite{gr_etal14}. For ease of representation only $M=20$ sensors and $N=25$ source locations were randomly selected. 

Figure \ref{fig:filter_tSVD} and \ref{fig:filter_tikh} highlight the potential advantages of the one--step approach over the two--step approach. In the case of tSVD, the filter factors of the two--step approach are zero whenever either $i < \lambda$ or  $j<\lambda$, which implies a jittering behaviour when plotted as a function of the product $\sigma_i \sigma_j$. In the one--step approach this issue is not present, because filtering is applied directly to the product of the singular values. In the case of the Tikhonov method we observe a similar behaviour, where in the one--step approach the filter factors increase smoothly when the product $\sigma_i \sigma_j$ increases, while in the two--step approach also higher values of such product may be severely filtered because of the effect of the regularization parameter on the individual singular values.

\begin{figure}
    \centering
    \includegraphics[scale=0.45]{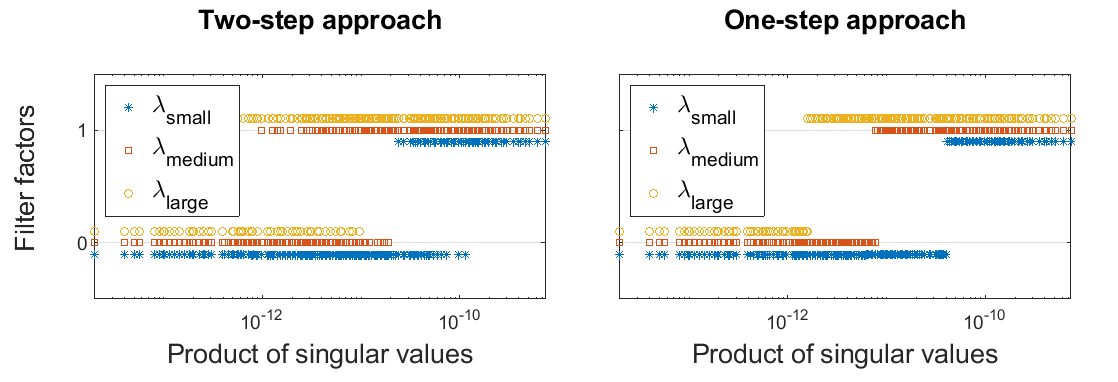}
    \caption{Filter factors for the tSVD method. On the x--axis the product of the singular values $\sigma_i\sigma_j$, on the y--axis the corresponding values of the filter factors $\varphi_i(\lambda)\varphi_j(\lambda)$ for the two--step approach (left) and $\widetilde{\varphi}_{i,j}(\lambda)$ for the one--step approach (right). The three different colors correspond to three different values of the regularization parameter, as illustrated in the legend. Please notice that the filter factors for tSVD are either zero or one, but different colors are plotted at slightly different levels for the sake of clarity.}
    \label{fig:filter_tSVD}
\end{figure}

\begin{figure}
    \centering
    \includegraphics[scale=0.45]{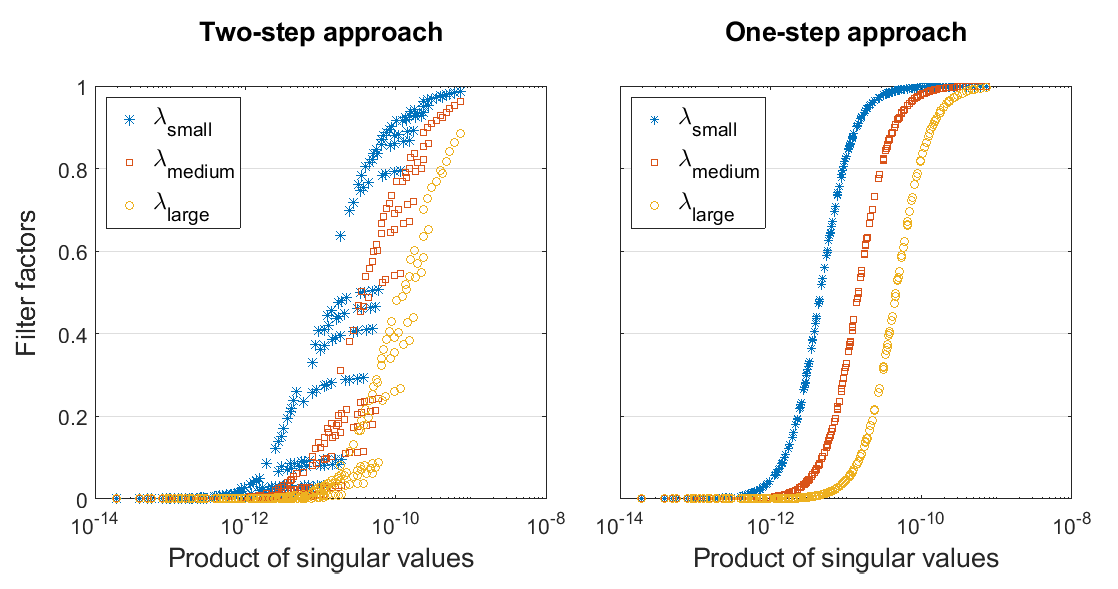}
    \caption{Filter factors for the Thikonov method. On the x--axis the product of the singular values $\sigma_i\sigma_j$, on the y--axis the corresponding values of the filter factors $\varphi_i(\lambda)\varphi_j(\lambda)$ for the two--step approach (left) and $\widetilde{\varphi}_{i,j}(\lambda)$ for the one--step approach (right). The three different colors correspond to three different values of the regularization parameter, as illustrated in the legend.}
    \label{fig:filter_tikh}
\end{figure}

\section{Discussion and future work}

Motivated by an analysis pipeline which is largely used for connectivity studies in the M/EEG community, in this article we have considered the problem of whether, in a two--step approach to the reconstruction of the cross--power spectrum of an unobservable signal, one should set the regularization parameter differently than what one would do for the reconstruction of the signal itself. 

First, making use of filter factor analysis, we obtained an explicit expression for the reconstruction error for the cross--power spectrum under the two--step approach. This formula is the analogous of the well--known formula for the reconstruction error in linear inverse problems, and holds in general. Then, under additional hypotheses of a white Gaussian signal and white Gaussian noise, we proved that the optimal values coincide for tSVD, while in the Tikhonov method the optimal value for the cross--spectrum is at most half the optimal value for the signal, thus proving also that the answer actually depends on the inverse method.

Our results are in line with the results of \cite{hietal16}, which showed empirically that the optimal estimate of connectivity is obtained with a regularization parameter smaller than the one providing the optimal estimate of the power spectrum, i.e. of the signal strength. Quantitatively, the recommendation in \cite{hietal16} was to use a parameter two orders of magnitude lower, while our main theorem for the Tikhonov method guarantees $\lambda_{\mbf{S}}^*<\lambda_{\mbf{x}}^*/2$. 

%However, there are several good reasons for this difference, including: (i) in their study, Hincapi\'e et al. used sinusoidal waveforms, while our proofs have been obtained for a signal with no temporal structure (white--noise--like); our result show that the optimal value for reconstruction of the cross--spectrum depends on the singular values of the forward matrix, that might have been relatively small in \cite{hietal16}; (iii) finally, and obviously, our result is only an upper bound.

Theorems \ref{theo:tsvd}, \ref{theo:tik} and \ref{theo:interval} have been obtained under the somewhat unrealistic assumption that the signal is a white--noise Gaussian process. While this is an important limitation with respect to the applications, preliminary numerical results indicate that the optimal value for the cross--spectrum is further reduced by the presence of a temporal structure in the signal; therefore, the inequality of our Theorem \ref{theo:tik} obtained in the ideal case would be strengthened in the application, in line with the mentioned results of \cite{hietal16}, which were obtained using sinusoidal signals.
In any case, future work will be devoted to investigating in detail the effect of a more plausible temporal structure of the input waveforms.

In addition, our results so far only concern the cross--power spectrum; future work will investigate the impact of the regularization parameter on the estimated value of the connectivity measure, such as Imaginary part of Coherency, Partial Directed Coherence and Granger causality.

Finally, as we point out in the last section, our results suggest that the two--step approach to estimation of the cross--power spectrum, and more in general of brain functional connectivity, might be sub--optimal.  This idea is in line with recent literature on the topic \cite{kietal08, cheu_etal10, fu_etal15, osetal18, su_etal17, tr_etal18}. Indeed, by looking at the filter factors obtained by the two--step approach, and comparing them to the filter factors one would get with a one--step approach to estimation of the power spectrum, we expect a better behaviour for this second option. Newly presented methods such as PSIICOS \cite{osetal18} present one--step approaches to the estimation of connectivity that benefit from this fact. Future work will be devoted to investigating more thoroughly this alternative approach.\\

\section*{Acknowledgments}
AS and MP have been partially supported by Gruppo Nazionale per il Calcolo Scientifico. SS kindly acknowledges Prof. Lauri Parkkonen and Dr. Narayan P. Subramaniyam for useful discussions. 
\section*{References}
\bibliographystyle{plain}
\bibliography{biblio}

\end{document}